\title[Convection--diffusion problems on networks]{On the transport limit of singularly perturbed convection-diffusion problems on networks}
\author{H. Egger \and N. Philippi }
\address{Department of Mathematics, TU Darmstadt, Germany}
\email{egger@mathematik.tu-darmstadt.de}
\email{philippi@mathematik.tu-darmstadt.de}
\newtheorem{theorem}{Theorem}
\newtheorem{lemma}[theorem]{Lemma}
\theoremstyle{definition}
\newtheorem{remark}[theorem]{Remark}
\newtheorem{assumption}[theorem]{Assumption}
\def\eps{\epsilon}
\def\dx{\partial_x}
\def\dxx{\partial_{xx}}
\def\dt{\partial_t}
\def\ddt{\frac{d}{dt}}
\def\E{\mathcal{E}}
\def\V{\mathcal{V}}
\def\X{\mathcal{X}}
\def\D{\mathcal{D}}
\def\A{\mathcal{A}}
\begin{document}

\begin{abstract}
We consider singularly perturbed convection-diffusion equations on one-dimensional networks (metric graphs)  as well as the transport problems arising in the vanishing diffusion limit. Suitable coupling condition at inner vertices are derived that guarantee conservation of mass as well as dissipation of a mathematical energy which allows us to prove stability and well-posedness. 
For single intervals and appropriately specified initial conditions, it is well-known that the solutions of the convection-diffusion problem converge to that of the transport problem with order $O(\sqrt\eps)$ in the $L^\infty(L^2)$-norm with diffusion $\eps \to 0$.
In this paper, we prove a corresponding result for problems on one-dimensional networks. The main difficulty in the analysis is that the number and type of coupling conditions changes in the singular limit which gives rise to additional boundary layers at the interior vertices of the network.  
Since the values of the solution at these network junctions are not known a-priori, the asymptotic analysis requires a delicate choice of boundary layer functions that allows to handle these interior layers.
\end{abstract}

\maketitle

\vspace*{-1em}

\begin{quote}
\noindent 
{\small {\bf Keywords:} 
convection-diffusion problems, singular perturbations, partial differential equations on networks}
\end{quote}

\begin{quote}
\noindent
{\small {\bf AMS-classification (2000):}
35B25,
35K20,
35R02,
76M45
}
\end{quote}

\section{Introduction} \label{sec:intro}

The transport and diffusion of a chemical substance in the stationary flow of an incompressible fluid through a pipe can be described by 
\begin{align} \label{eq:1}
  a \dt u_\eps(x,t) + b \dx u_\eps(x,t) &= \eps \dxx u_\eps(x,t), 
\end{align}
which is assumed to hold for $x \in (0,\ell)$ and $t>0$. Here, $u$ is the concentration of the substance, $a$, $\ell$ are the cross-section and length of the pipe, $b$ is the constant flow rate, and $\eps>0$ is the diffusion coefficient. The system is complemented by boundary conditions 
\begin{align} 
u_\eps(0,t) &= \hat u_\eps^0(t) \label{eq:2}
\qquad \text{and} \qquad 
u_\eps(\ell,t) = \hat u_\eps^\ell(t), 
\end{align}
and by specifying $u_\eps(x,0)$ at initial time $t=0$. 
In the vanishing diffusion limit $\eps \to 0$, the flow of the substance in the fluid is characterized by the simpler transport equation
\begin{align} \label{eq:4}
a \dt u(x,t) + b \dx u(x,t) &= 0.
\end{align}
Assuming $b>0$, this system is to be complemented by an inflow boundary condition  
\begin{align} \label{eq:5}
u(0,t) &= \hat u^0(t) \qquad \text{at } x=0,
\end{align}
while the condition at $x=\ell$ becomes obsolete. 
For small $\eps>0$, the second boundary condition in \eqref{eq:2} therefore gives rise to a boundary layer at the outflow boundary $x=\ell$. 
In general, the solutions of \eqref{eq:1}--\eqref{eq:2} may also exhibit initial layers, whose presence can however be avoided by appropriate choice of initial values.

The asymptotic limit of convection-diffusion problems as $\eps \to 0$ has been studied intensively in the literature, both from an analytical and a numerical point of view; for details, one may refer e.g. to  \cite{Goering83, Linss, Miller12, RoosStynesTobiska, Shishkin}.
Problems with other types of boundary conditions have been considered, e.g., in \cite{ChaconRebollo10}.
For appropriate initial and boundary data, the solutions of \eqref{eq:1}--\eqref{eq:2} and \eqref{eq:4}--\eqref{eq:5} can be shown to satisfy the asymptotic estimate
\begin{align} \label{eq:est}
\|u_\eps(\cdot,t) - u(\cdot,t)\|_{L^2(0,\ell)} \le C \sqrt{\eps},
\end{align}
with a constant $C$ independent of $\eps$ and $t$. By considering the corresponding stationary problem, the rate $\sqrt{\eps}$ can also be seen to be optimal.

In this paper, we consider convection--diffusion problems on a one-dimensional network of pipes. In that case, equations \eqref{eq:1} and \eqref{eq:4} are assumed to hold for every single pipe while the boundary conditions \eqref{eq:2} and \eqref{eq:5} have to be augmented by appropriate coupling conditions at pipe junctions. These have to be chosen in order to guarantee conservation of mass across network junctions as well as dissipation of a mathematical energy, which is utilized to ensure the well-posedness of the problems.
We refer to \cite{Garavello, Lagnese, Mehmeti, Mugnolo} for background material on the analysis of partial-differential equations on networks.

The main result of our paper will be to prove that an estimate analogous to \eqref{eq:est} also holds for singularly perturbed convection--diffusion problems on networks. 
One of the main difficulties in the asymptotic analysis here is that the number and type of coupling conditions changes in the singular limit $\eps \to 0$. This gives rise to additional internal layers at pipe junctions that need to be handled appropriately.  
Since the nodal values $\hat u_\eps$, $\hat u$ in equations \eqref{eq:2} and \eqref{eq:5} are part of the solution and not prescribed a-priori, like the boundary values on a single pipe, a somewhat delicate choice of boundary layer functions at network junctions is required.

\bigskip 

The remainder of the manuscript is organized as follows:
In Section~\ref{sec:prelim}, we introduce our basic notation and then study the convection-diffusion and the transport problem on networks. The choice of suitable coupling conditions ensures conservation of mass at network junctions and dissipation of a mathematical energy, which in turn allows us to establish well-posedness of the problems by semigroup theory. 
In Section~\ref{sec:asymptotic}, we state and prove our main result, namely a quantitative estimate similar to \eqref{eq:est} for the convergence of solutions to the convection-diffusion problem with vanishing diffusion $\eps \to 0$ towards that of the corresponding transport problem.

\section{Notation and preliminaries} \label{sec:prelim}

After introducing our basic notation, we formally state the convection-diffusion and the limiting transport problem and study their well-posedness.

\subsection{Basic notation}

Following \cite{EggerKugler18}, the network is represented by a finite, directed, and connected graph with vertices $\V=\{v_1,\dots,v_n\}$ and edges $\E=\{e_1,\dots,e_m\}\subset \V\times\V$. For every edge $e=(v_i,v_j)$ we define two numbers
\begin{align}\label{eq:nev}
n^{e}(v_i)=-1\quad\text{and}\quad n^{e}(v_j)=1
\end{align} 
to indicate the start and end point of the edge, 
and we set $n^{e}(v)=0$ if $v\in\V\backslash\{v_i,v_j\}$. 
For any $v\in\V$ we define the set of incident edges {$\E(v):=\{e\in\E:n^{e}(v)\neq 0\}$}, and distinguish between inner vertices {$\V_0:=\{v\in\V:\vert\E(v)\vert\geq 2\}$} and boundary vertices $\V_{\partial}:=\V\backslash\V_0$; see Fig.~\ref{fig:network} for an illustration. 
\begin{figure}[ht]
\centering
\begin{tikzpicture}[scale=3]
\node (A) at (0,0.5) [circle,draw,thick] {$v_1$};
\node (B) at (0,-0.5) [circle,draw,thick] {$v_2$};
\node (C) at (0.75,0) [circle,draw,thick] {$v_3$};
\node (D) at (1.75,0) [circle,draw,thick] {$v_4$};

\draw[->, very thick] (A) to node[above] {$e_1$} (C);
\draw[->, very thick] (B) to node[above] {$e_2$} (C);
\draw[->, very thick] (C) to node[above] {$e_3$} (D);
\end{tikzpicture}
\caption{A network with three edges $e_1=(v_1,v_2)$, $e_2=(v_2,v_3)$, and $e_3=(v_2,v_4)$, inner vertex $\V_0=\{v_3\}$, and boundary vertices $\V_\partial=\{v_1,v_2,v_4\}$.
The set $\mathcal{E}(v_3)=\{e_1,e_2,e_3\}$ denotes the edges adjacent to the junction $v_3$.
Let the arrows depict the flow direction. Then we split the set of boundary vertices by $\V_{\partial}^{in}=\{v_1,v_2\}$ and $\V_\partial^{out}=\{v_4\}$ into inflow and outflow vertices. 
In a similar manner, we can split the set $\E(v_3)$ by $\E^{in}(v_3)=\{e_1,e_2\}$ and $\E^{out}(v_3)=\{e_3\}$ into edges that go into or out of the vertex $v_3$.}
\label{fig:network}
\end{figure}
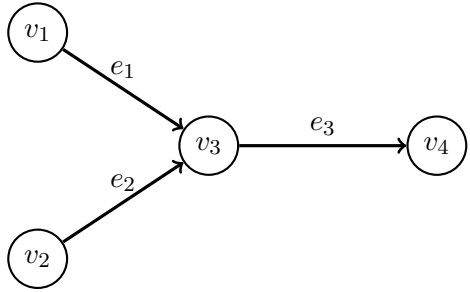

Every edge $e\in\E$ has a positive length $\ell^{e}$, 
and we identify $e$ with the interval $(0,\ell^{e})$. 
The Lebesgue measure on $(0,\ell^e)$ then induces a metric on $e$, and we denote by $L^2(e)=L^2(0,\ell^{e})$ the space of square integrable functions on the edge $e$. 
We further use
\begin{align*}
L^2(\E)=L^2(e_1)\times\dots\times L^2(e_m)=\{u: u^{e}\in L^2(e)\ \text{for all}\ e\in\E\}
\end{align*}
to denote the space of square integrable functions on the network. 
Here and below, $u^{e}=u\vert_e$ is the restriction of a function $u$ defined on the whole network to a single edge $e$. 
The natural norm and scalar product of the space $L^2(\E)$ are given by
\begin{align*}
  \| u \|_{L^2(\E)}^2=\sum\nolimits_{e\in\E}\Vert u^{e}\Vert_{L^2(e)}^2
\qquad\text{and}\qquad
 (u,w)_{L^2(\E)}=\sum\nolimits_{e\in\E}(u^{e},w^{e})_{L^2(e)}.
\end{align*}
%
We will further make use of the broken Sobolev spaces 
\begin{align*}
H_{pw}^s(\E)=\{u\in L^2(\E): u^{e}\in H^s(e) \text{ for all } e\in\E\},
\end{align*}
which are again equipped with the canonical norms and scalar products, defined by
\begin{align*}
  \| u\|_{H^s_{pw}(\E)}^2=\sum\nolimits_{e\in\E}\Vert u^{e}\Vert_{H^s(e)}^2
\qquad\text{and}\qquad
  (u,w)_{H^s_{pw}(\E)}=\sum\nolimits_{e\in\E}(u^{e},w^{e})_{H^s(e)}.
\end{align*}
Note that for $s>1/2$, the functions $u\in H^s_{pw}(\E)$ are continuous along edges $e\in\E$, while they may be discontinuous across junctions $v\in\V_0$. 
The subspace of functions that are also continuous across junctions is denoted by $H^1(\E)$. Elements of $H^1(\E)$ have a unique value $u(v)$ for every vertex $v \in \V$,
and we write $\ell_2(\V)$ for the set of  possible vertex values.

\subsection{Convection-diffusion problem}

We now formally introduce the convection-diffusion problem on networks to be studied, as well as our basic assumptions on the model parameters. 
A similar problem has been considered in \cite{Oppenheimer00}; also see \cite{Mugnolo} for further examples.

The transport of the substance along every edge $e \in \E$ shall be described by 
\begin{align} \label{eq:cd1}
a^e\dt u_\eps^e(x,t)+b^e\dx u_\eps^e(x,t)-\eps^e\dxx u_\eps^e(x,t)&=0, \qquad x\in e, \ e \in \E,
\end{align}
and we assume the concentration $u$ to be continuous across vertices, i.e.,
\begin{align} \label{eq:cd2}
u_\eps^e(v,t)&=\hat u_\eps^v(t), \qquad v\in\V,\ e\in\E(v), 
\end{align}
for some auxiliary functions $\hat u_\eps^v(t)$, $v \in \V$ to be determined by the following additional coupling conditions:
At pipe junctions $v \in \V_0$, we require that 
\begin{align}\label{eq:cd3}
\sum\nolimits_{e\in\E(v)}\big(b^eu_\eps^e(v,t)-\eps^e\dx u_\eps^e(v,t)\big)n^e(v)&=0, \qquad \ v\in\V_0,
\end{align}
and at boundary vertices $v \in \V_\partial$, we explicitly prescribe the concentration by 
\begin{align}\label{eq:cd4}
\hat u_\eps^v(t) &= g^v(t),\qquad v\in\V_\partial. 
\end{align}
The above equations are considered for $t>0$ and complemented by initial conditions 
\begin{align} \label{eq:cd5}
u_\eps^e(x,0)&=u_0^e(x), \qquad x\in e, \ e \in \E.
\end{align}
For the analysis of the convection--diffusion problem \eqref{eq:cd1}--\eqref{eq:cd5} which is developed in the rest of the paper, 
we make the following assumptions on the model parameters. 
\begin{assumption}\label{ass:1}
On every edge $e \in \E$, the functions $a$, $\eps$, and $b$ are constant and uniformly positive, and at pipe junctions $v \in \V_0$, 
the flow rate satisfies the conservation condition
\begin{align} \label{eq:b} 
\sum\nolimits_{e\in\E(v)}b^e n^e(v)&=0,  \qquad v\in\V_0. 
\end{align}
We further assume that the diffusion coefficient is bounded by $0 < \eps \le 1$.
\end{assumption}
\begin{remark}
The assumption that $a$ and $\eps$ are piecewise constant could be relaxed with minor changes in the arguments.
Since the flow direction changes when changing the orientation of the edge $e$, the sign of $b$ can always be adopted as desired by appropriate orientation of the edges. The basic assumption on $b$ therefore is, that is does no vanish. Otherwise, the transport problem \eqref{eq:4} degenerates to an ordinary differential equation. 
\end{remark}
The following theorem establishes well-posedness of the problem under consideration. 
\begin{theorem}\label{thm:cd}
Let Assumption~\ref{ass:1} hold and $T>0$. 
Then for any $u_0\in H^1(\E) \cap H_{pw}^2(\E)$ and $g\in C^2([0,T];\ell_2(\V_\partial))$ satisfying 
\eqref{eq:cd2}--\eqref{eq:cd4} for some $\hat u_0 \in \ell_2(\V)$,  
the system \eqref{eq:cd1}--\eqref{eq:cd5} 
has a unique classical solution 
\begin{align*}
u_\eps\in C^1([0,T];L^2(\E))\cap C^0([0,T];H^1(\E) \cap H^2_{pw}(\E))
\end{align*} 
with $\hat u_\eps^v(t) = u_\eps(v,t)$ defined by \eqref{eq:cd2}.  
Moreover, any solution of \eqref{eq:cd1}--\eqref{eq:cd5} satisfies 
\begin{align*}
\ddt \int_\E a u_\eps\, dx 
&= \sum_{v\in\V_\partial} \big(-b^e g^v+\eps^e \dx u_\eps(v)\big)n^e(v),
\end{align*}
i.e., mass is conserved up to flow over the boundary, as well as the  energy identity
\begin{align*}
\tfrac{1}{2} \ddt \| a^{1/2} u_\eps \|^2_{L^2(\E)}
&=-\| \eps^{1/2} \dx u_\eps \|^2_{L^2(\E)} 
  + \sum_{v\in\V_\partial}  \big(- \tfrac{1}{2} b^e g^v+\eps^e \dx u_\eps(v)\big)g^v\, n^e(v).
\end{align*}
\end{theorem}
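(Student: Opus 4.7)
My plan is to prove well-posedness by reducing to an abstract Cauchy problem on $L^2(\E)$ and invoking standard semigroup theory, then to derive both identities by testing the equation against $1$ and $u_\eps$ on each edge and carefully tracking the vertex contributions.

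First I would homogenise the inhomogeneous Dirichlet data. Since $g\in C^2([0,T];\ell_2(\V_\partial))$ and the network is one-dimensional, one can construct a lift $w\in C^2([0,T];H^1(\E)\cap H^2_{pw}(\E))$ with $w(v,t)=g^v(t)$ for $v\in\V_\partial$ and $w(v,t)=0$ at interior vertices, e.g.\ by a piecewise affine construction on each edge that is further adjusted to satisfy \eqref{eq:cd3}. Setting $\tilde u_\eps=u_\eps-w$ turns the problem into one with homogeneous boundary data and a right-hand side $f(t)=-a\,\dt w-b\,\dx w+\eps\,\dxx w$ inheriting the regularity of $w$.

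Next I introduce the unbounded operator $A$ on the Hilbert space $L^2(\E)$ equipped with the weighted inner product $(u,v)_a=(au,v)_{L^2(\E)}$, defined by $Au=\tfrac1a(\eps\dxx u-b\,\dx u)$ on the domain
\begin{align*}
D(A)=\{u\in H^1(\E)\cap H^2_{pw}(\E):\ u(v)=0\ \forall v\in\V_\partial,\ \text{and \eqref{eq:cd3} holds}\}.
\end{align*}
Integration by parts on each edge, together with continuity \eqref{eq:cd2} and the Kirchhoff-type coupling \eqref{eq:cd3}, makes the vertex contributions at $v\in\V_0$ collapse, while the Dirichlet condition annihilates the boundary-vertex terms. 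Using also Assumption~\ref{ass:1} to handle the convective part, one obtains $(Au,u)_a\le -\|\eps^{1/2}\dx u\|^2+C\|u\|_a^2$, showing that $A-CI$ is dissipative. Surjectivity of $\lambda I-A$ for large $\lambda$ follows from Lax--Milgram applied to the associated coercive bilinear form on the closed subspace of $H^1(\E)$ respecting the Dirichlet values. Hence $A$ generates a $C_0$-semigroup (in fact an analytic one), and because $u_0-w(\cdot,0)\in D(A)$ and $f\in C^1([0,T];L^2(\E))$, standard results yield a unique classical solution $\tilde u_\eps\in C^1([0,T];L^2(\E))\cap C^0([0,T];D(A))$; transferring back gives $u_\eps=\tilde u_\eps+w$ with the asserted regularity.

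For the mass identity I would integrate \eqref{eq:cd1} over $\E$, apply integration by parts on each edge, and use \eqref{eq:cd3} to kill the contributions at interior vertices, so that only boundary-vertex terms remain and produce the stated formula. For the energy identity I would test \eqref{eq:cd1} with $u_\eps$ on each edge; the diffusion term yields $-\|\eps^{1/2}\dx u_\eps\|_{L^2(\E)}^2$ plus vertex boundary terms, while the convection term, rewritten as $\tfrac12 b\,\dx(u_\eps^2)$ using constancy of $b^e$, yields boundary terms $\tfrac12 b^e u_\eps^2(v) n^e(v)$. Summing over edges, the continuity \eqref{eq:cd2} together with the flux balance \eqref{eq:b} shows that the convective contributions at every $v\in\V_0$ cancel, and the Kirchhoff condition \eqref{eq:cd3} cancels the diffusive contributions there as well. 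Only the boundary-vertex terms survive, giving the claimed identity with $\hat u_\eps^v=g^v$.

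The main obstacle is the bookkeeping at interior vertices: one must verify that continuity of $u_\eps$ together with \eqref{eq:b} and \eqref{eq:cd3} really does eliminate every contribution from $v\in\V_0$ both in the dissipativity estimate used for semigroup generation and in the energy identity. A secondary technical point is ensuring that the lift $w$ can be chosen regular enough in $t$ while also satisfying the Kirchhoff condition, so that $f$ lies in $C^1([0,T];L^2(\E))$ and classical, not merely mild, solutions are obtained.
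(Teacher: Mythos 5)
Your proposal is correct and follows essentially the same route as the paper: homogenise the Dirichlet data by a piecewise affine lift, recast the remainder as an abstract Cauchy problem for a dissipative operator whose vertex terms cancel via \eqref{eq:cd2}, \eqref{eq:cd3} and \eqref{eq:b}, generate the semigroup, and obtain the mass and energy identities by testing with $1$ and $u_\eps$. The only (harmless) deviations are that the paper's cancellation of the convective vertex terms gives exact dissipativity $(\A_\eps z,z)_\X\le -\|\eps^{1/2}\dx z\|^2_{L^2(\E)}$ rather than your shifted estimate, and that you make explicit the range condition via Lax--Milgram, which the paper subsumes in its appeal to Lumer--Phillips.
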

\begin{proof}
For later reference we sketch the main arguments, which allow to apply the Lumer-Phillips theorem of semigroup theory; related results can also be found in \cite{Dorn08,KramarSikolya05,Mugnolo}. 

\emph{Step~1.}
Let $w(t) \in H^1(\E) \cap H^2_{pw}(\E)$ be the unique function that is affine linear on every edge and satisfies $w(v,t)=g^v(t)$ for all $v \in \V_\partial$ as well as $w(v,t)=0$ for $v \in \V_0$. 
Then any solution of the problem can be split into $u_\eps=w-z$ with $z(v,t)=0$ for all $v \in \V_\partial$, $t>0$, and using the linearity of the problem, one can see that $z$ satisfies 
\begin{align}
  \dt  z^{e}+b^{e}\dx z^{e}-\eps^e\dxx z^e=f^e,\qquad x\in e, \ e \in \E, \ t>0,\label{eq:cdh1}
\end{align}
with right hand side $f^e = \dt  w^{e}+b^{e}\dx w^{e}$, as well as the coupling conditions
\begin{align}
  z^{e}(v,t)&=\hat z^v(t),\qquad\ \ v\in\V,\ e\in\E(v),\ t>0. \label{eq:cdh2}
\end{align}
The auxiliary functions $\hat z^v(t) = \hat u^v(t)$ are here defined by the conservation condition
\begin{align} \label{eq:cdh3}
\sum\nolimits_{e\in\E(v)}\big(b^e z^e(v,t)-\eps^e\dx z^e(v,t)\big)n^e(v)&= 0,\qquad v\in\V_0,
\end{align}
at pipe junctions $v \in \V_0$, and by homogeneous boundary conditions 
\begin{align}\label{eq:cdh4}
\hat z^v(t) &= 0,\qquad v\in\V_\partial,
\end{align}
for the remaining vertices $v \in \V_\partial$. 
In addition, there holds
\begin{align} \label{eq:cdh5}
z^e(x,0)&=z_0^e(x), \qquad x\in e, \ e \in \E,
\end{align}
with $z_0^e(x) = w^e(x,0) - u_0^e(x)$. Let us note that by construction and the regularity assumption on $u_0$, we have $z_0 \in H^1(\E) \cap H^2_{pw}(\E)$ and $z_0(v)=0$ for all $v \in \V_\partial$. 

\emph{Step~2.}
Now set $\X=L^2(\E)$ with norm and scalar product defined by
\begin{align*}
  \| u\|_\X:=\| a^{1/2} u\|_{L^2(\E)} 
\qquad\text{and}\qquad 
(u,w)_\X:=(au,w)_{L^2(\E)}. 
\end{align*}
We further introduce the dense subspace 
\begin{align*}
 \D(\A_\eps):=\{z\in H^2_{pw}(\E): z \text{ satisfies  \eqref{eq:cdh2}--\eqref{eq:cdh4}} \text{ with some } \hat z \in \ell_2(\V)\},
\end{align*}
on which we formally define the linear operator
\begin{align}
  \A_\eps:\D(\A_\eps) \subset \X \rightarrow \X, 
\qquad \A_\eps z|_{e}&:=-\frac{1}{a^e}\big(b^e\dx z^e -\eps^e\dxx z^e\big). \label{eq:A}
\end{align}
%
Problem \eqref{eq:cdh1}--\eqref{eq:cdh5} can then be written as
an abstract evolution problem in $\X$, namely 
\begin{align}
  \dt z(t) &= \A_\eps z(t)+f(t),\qquad t>0,\label{eq:cdh6}\\
  z(0)&=z_{0}.\label{eq:cdh7}
\end{align}
By construction of $f$ and $z_0$ and the assumptions on the data, one can immediately see that $f \in C^1([0,T];\X)$ and $z_0 \in \D(\A_\eps)$. 
Moreover, the operator $\A_\eps$ satisfies 
\begin{align*} 
&(\A_\eps z, z)_\X 
= (-b \dx z + \eps \dxx z, z)_{L^2(\E)} 
= \sum\nolimits_{e\in\E} (-b^e \dx z^e + \eps^e \dxx z^e, z^e)_{L^2(e)} \\
&= \sum\nolimits_{e\in\E} (b^e z^e - \eps^e \dx z^e, \dx z^e)_{L^2(e)} + \sum\nolimits_{v \in \V} \sum\nolimits_{e\in\E(v)} \big(- b^e z^e(v) + \eps^e \dx z^e(v) \big) z^e(v) n^e(v). 
\end{align*}
The first term in the last line can be estimated by
\begin{align*}
(i) 
&= \sum\nolimits_{e\in\E} (b^e z^e - \eps^e \dx z^e, \dx z^e)_{L^2(e)} \\
&= \sum\nolimits_{v \in \V} \sum\nolimits_{e\in\E(v)} \tfrac{1}{2} b^e |z^e(v)|^2 n^e(v) - \sum\nolimits_{e \in \E}\eps^e \|\dx z^e\|^2_{L^2(e)} 
=(iii)+(iv).
\end{align*}
By rearranging the order of summation and use of the coupling and boundary conditions specified in \eqref{eq:cdh2}--\eqref{eq:cdh4} as well as the conservation condition \eqref{eq:b} for the flow rates, one can see that 
$(iii) =  \frac{1}{2} \sum\nolimits_{v \in \V} |\hat z^v|^2 \sum\nolimits_{e\in\E(v)} b^e n^e(v) = 0$, and hence 
\begin{align*}
(i) = (iv) = - \|\eps^{1/2} \dx z\|^2_{L^2(\E)}.
\end{align*}
The second term in the above expression for $(\A_\eps z, z)_\X$ can be further evaluated by
\begin{align*}
(ii) 
&= \sum\nolimits_{v \in \V} \sum\nolimits_{e\in\E(v)} \big(- b^e z^e(v) + \eps^e \dx z^e(v) \big) z^e(v) n^e(v) \\
&=\sum\nolimits_{v \in \V} \hat z^v \sum\nolimits_{e \in \E(v)}  \big(- b^e z^e(v) + \eps^e \dx z^e(v) \big) n^e(v) = 0,  
\end{align*}
where we again used the coupling and boundary conditions \eqref{eq:cdh2}--\eqref{eq:cdh4} appearing in the definition of the space $\D(\A_\eps)$. In summary, we thus have shown that
\begin{align} \label{eq:Aeps}
(\A_\eps z,z)_\X \le -  \|\eps^{1/2} \dx z\|^2_{L^2(\E)} \le 0 \qquad \text{for all}\ z \in \D(\A_\eps),
\end{align}
from which we deduce that $\A_\eps : \D(\A_\eps) \subset \X \to \X$ is dissipative. 
By the Lumer-Phillips theorem, the operator $\A_\eps$ thus is the generator of a strongly-continuous semigroup which implies the existence of a unique classical solution $z \in C^1([0,T];\X) \cap C([0,T];\D(\A_\eps))$ for \eqref{eq:cdh6}--\eqref{eq:cdh7}; see e.g. \cite{EngelNagel,Pazy} for details. 

\emph{Step~3.}
By combination with the regularity estimate for $w$ constructed in Step~1, one can see that $u=w-z$ is a solution to \eqref{eq:cd1}--\eqref{eq:cd5} with the required regularity.
Uniqueness follows by observing that the difference $z = u_1-u_2$ of any two solutions of \eqref{eq:cd1}--\eqref{eq:cd5} would solve \eqref{eq:cdh6}--\eqref{eq:cdh7} with $f=0$ and $z_0=0$, which implies $u_1-u_2 \equiv 0$.

\emph{Step~4.} 
Mass conservation follows by integrating \eqref{eq:cd1} over all pipes, summing up, 
and using the coupling and boundary conditions \eqref{eq:cd2}--\eqref{eq:cd4} as well as the balance condition \eqref{eq:b} for the flow rates. 
To show the energy-identity, we first multiply \eqref{eq:cd1} with $u^e$, integrate over the edges $e$, sum up the results, and use the coupling and boundary conditions \eqref{eq:cd2}--\eqref{eq:cd4}. Similar arguments were used to establish dissipativity of the operator $\A_\eps$ above.
\end{proof}

\begin{remark} \label{rem:cd}
The energy identity of Theorem~\ref{thm:cd} yields uniform bounds
\begin{align*}
\tfrac{1}{2} \|a^{1/2}u_\eps\|_{L^\infty(0,T;L^2(\E))} + \|\eps^{1/2} \dx u_\eps\|_{L^2(0,T;L^2(\E))} \le C(u_0,g),
\end{align*}
which allow to deduce existence and uniqueness of solutions also for less regular boundary and initial data. 
Similar results could be established alternatively also by Galerkin approximation; see \cite[Ch.~7]{Evans} or \cite[Ch.~XVIII]{DL5}.
\end{remark}

\subsection{Limiting transport problem}

We now turn to the vanishing diffusion limit $\eps \to 0$.
Since we assumed $b^e>0$ on every edge $e=(v_1,v_2)$, 
it is natural to call $v_1$ the inflow and $v_2$ the outflow vertex of the edge. 
For any $v \in \V$, we denote by $\E^{in}(v)=\{e \in \E: e=(\cdot,v)\}$ and $\E^{out}(v)=\{e \in \E: e=(v,\cdot)\}$ the edges which carry flow into or out of the vertex $v$, and we further split the boundary vertices into the sets $\V_\partial^{in} = \{v \in \V_\partial: |\E^{out}(v)|=1\}$ and $\V_\partial^{out}=\{v \in \V_\partial: |\E^{in}(v)|=1\}$; see Fig.~\ref{fig:network} for an illustration.
 We then consider the following problem; see \cite{DornKramarNagelRadl10,EggerPhilippi20} for related results. 
On every edge $e \in \E$, the transport is described by 
\begin{align}
a^e\dt u^e(x,t)+b^e\dx u^e(x,t)&=0, \qquad x \in e, \ e \in \E. \label{eq:t1}
\end{align}
In contrast to the convection-diffusion problem, we now only need one boundary condition at the inflow boundary of each edge, and accordingly we set
\begin{align}\label{eq:t2}
u^e(v,t)&=\hat u^v(t), \qquad v\in\V 
,\ e\in\E^{\text{out}}(v),
\end{align}
with auxiliary values $\hat u^v$ determined by the conservation condition
\begin{align}\label{eq:t3}
\sum\nolimits_{e\in\E^{in}(v)} b^e u^e(v,t) n^e(v) + \sum\nolimits_{e\in\E^{out}(v)} b^e \hat u^v(t) n^e(v) &= 0, \qquad v \in \V_0,
\end{align}
at inner vertices.
Note that the vertices in $\V_0$ have at least one inflow and one outflow edge. 
On the inflow boundary vertices, which only have one outflow edge, we set 
\begin{align}\label{eq:t4}
  \hat u^v(t)&=g^v(t),\qquad v\in\V_\partial^{in}.
\end{align}
The above equations are assumed to hold for $t>0$ and complemented by initial conditions
\begin{align} \label{eq:t5}
u^e(x,0)&=u_0^e(x), \qquad x \in e, \ e \in \E.
\end{align}
%
From equation~\eqref{eq:t3} and the conservation condition~\eqref{eq:b} for the flow rate $b$, one can deduce that the nodal values $\hat u^v$ at inner vertices $v \in \V_0$ are convex combinations of the concentrations $u^e(v)$, $e \in \E^{in}(v)$ entering the junction $v$. 
These \emph{mixtures} serve as inflow values for the pipes $e \in \E^{out}(v)$ with flow leaving the corresponding vertex.
%

\begin{remark}
For the asymptotic analysis given in Section~\ref{sec:asymptotic}, it will be convenient to additionally define values $\hat u^v$ for the outflow vertices by
\begin{align}\label{eq:t6}
  \hat u^v(t)&=g^v(t),\qquad v\in\V_\partial^{out},
\end{align}
where $g^v$, $v \in \V_\partial$ are the same boundary data 
as for the convection-diffusion problem.
Note that the values $\hat u^v$, $v \in \V_\partial^{out}$ do not appear in the other equations, and therefore are not required for the analysis of the transport problem presented in the sequel.
\end{remark}

With similar arguments as in the analysis of the convection--diffusion problem \eqref{eq:cd1}--\eqref{eq:cd5},  
we can also obtain a well-posedness result for the transport problem \eqref{eq:t1}--\eqref{eq:t5}.
\begin{theorem}\label{thm:t}
Let Assumption~\ref{ass:1} hold and $T>0$ be given. 
Then for any $u_0\in H_{pw}^1(\E)$ and $g\in C^2([0,T];\ell_2(\V_{\partial}^{in}))$, 
satisfying \eqref{eq:t2}--\eqref{eq:t4} at $t=0$ with some $\hat u_0\in\ell_2(\V\setminus\V_\partial^{out})$, 
the system \eqref{eq:t1}--\eqref{eq:t5} has a unique classical solution
\begin{align*}
u\in C^1([0,T];L^2(\E))\cap C^0([0,T];H^1_{pw}(\E))
\end{align*}
with $\hat u \in C^0([0,T];\ell_2(\V\setminus\V_\partial^{out}))$ defined by \eqref{eq:t2}. 
Moreover, the solution satisfies 
\begin{align*}
\ddt \int_\E a u\, dx 
&= \sum\nolimits_{v\in\V_\partial^{in}} b^e g^v - \sum\nolimits_{v\in\V_\partial^{out}} b^e u^e(v),
\end{align*}
i.e., mass is conserved up to flow over the boundary,
as well as the energy identity 
\begin{align*}
\ddt \| a^{1/2} u \|^2_{L^2(\E)}
&= 
 \sum\nolimits_{v\in\V_\partial^{\text{in}}} b^e |g^v|^2 
 -\sum\nolimits_{v\in\V_\partial^{\text{out}}} b^e |u^e(v)|^2
\\&\qquad\qquad\quad 
-\sum\nolimits_{v\in\V_0}\sum\nolimits_{e\in\E^{\text{in}}(v)} b^{e} \, |u^{e}(v)-\hat u^{v}|^2. 
\end{align*}
\end{theorem}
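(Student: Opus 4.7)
The plan is to follow closely the semigroup approach used for Theorem~\ref{thm:cd}, with the operator now being first-order and the coupling conditions replaced by the mixing rule \eqref{eq:t3}. As a first step I would reduce to homogeneous inflow data: choose a lifting $w \in C^2([0,T];H^1_{pw}(\E))$ satisfying $w^e(v,t)=g^v(t)$ for every edge $e\in\E^{out}(v)$ with $v\in\V_\partial^{in}$ and $w^e(v,t)=0$ at all remaining endpoints of $e$. The substitution $u=w-z$ then leads to the problem $a^e\dt z^e+b^e\dx z^e=f^e$ with $f^e=\dt w^e+b^e\dx w^e$, vanishing inflow boundary condition $\hat z^v=0$ for $v\in\V_\partial^{in}$, the mixing condition \eqref{eq:t3} at inner vertices, and compatible initial data $z_0=w(\cdot,0)-u_0\in H^1_{pw}(\E)$.

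Next I would set $\X=L^2(\E)$ with the weighted inner product $(u,w)_\X=(au,w)_{L^2(\E)}$ and introduce the operator $\A_0 z|_e:=-(b^e/a^e)\dx z^e$ on the domain
\begin{align*}
\D(\A_0):=\{z\in H^1_{pw}(\E): \ z^e(v)=\hat z^v \text{ for } e\in\E^{out}(v),\ \eqref{eq:t3}\text{ holds},\ \hat z^v=0 \text{ for } v\in\V_\partial^{in}\}.
\end{align*}
Integration by parts on each edge and reordering the summation gives
\begin{align*}
(\A_0 z,z)_\X =-\tfrac{1}{2}\sum\nolimits_{v\in\V}\sum\nolimits_{e\in\E(v)} b^e |z^e(v)|^2 n^e(v).
\end{align*}
At any $v\in\V_0$, the flow balance \eqref{eq:b} together with \eqref{eq:t3} produces the key algebraic identity
\begin{align*}
\sum\nolimits_{e\in\E^{in}(v)} b^e |z^e(v)|^2-\sum\nolimits_{e\in\E^{out}(v)} b^e|\hat z^v|^2
=\sum\nolimits_{e\in\E^{in}(v)} b^e |z^e(v)-\hat z^v|^2\ge 0,
\end{align*}
inflow boundary terms vanish because $\hat z^v=0$, and outflow boundary terms contribute $b^e|z^e(v)|^2\ge 0$. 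This yields $(\A_0 z,z)_\X\le 0$, i.e.\ dissipativity of $\A_0$.

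The step that requires some care is the range condition for Lumer--Phillips, since the directed graph may contain cycles and the resolvent equation $\lambda z-\A_0 z=f$ couples the unknown vertex values $\hat z^v$ through \eqref{eq:t3}. On each edge the ODE can be solved explicitly as $z^e(\ell^e)=e^{-\lambda a^e \ell^e/b^e}\hat z^{v_1^e}+F^e[f]$, and inserting this into \eqref{eq:t3} leads to a linear fixed point system $\hat z=M_\lambda\hat z+F_\lambda$ whose matrix $M_\lambda$ has row sums bounded by $\max_e e^{-\lambda a^e \ell^e/b^e}<1$ for $\lambda$ large enough. Hence $I-M_\lambda$ is invertible, $(\lambda-\A_0)\D(\A_0)=\X$, and Lumer--Phillips produces a $C_0$-semigroup on $\X$. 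Since $z_0\in\D(\A_0)$ and $f\in C^1([0,T];\X)$, the abstract theory yields $z\in C^1([0,T];\X)\cap C([0,T];\D(\A_0))$, and the required regularity for $u=w-z$ follows.

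Finally, mass conservation is obtained by integrating \eqref{eq:t1} over every edge, summing up, and evaluating the boundary terms at each vertex via \eqref{eq:t3}, \eqref{eq:t4}, and the balance \eqref{eq:b} for inner vertices; only the contributions at $\V_\partial^{in}$ and $\V_\partial^{out}$ survive. The energy identity follows by testing \eqref{eq:t1} with $u^e$, integrating, and applying the same algebraic identity used in the dissipativity estimate, now carrying the boundary terms $b^e|g^v|^2$ at $\V_\partial^{in}$ and $-b^e|u^e(v)|^2$ at $\V_\partial^{out}$ to the right-hand side. The main obstacle throughout is the verification of the range condition in the presence of cycles; all other steps are parallel to Step~2 of the proof of Theorem~\ref{thm:cd}.
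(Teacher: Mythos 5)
Your proposal is correct and follows the same overall route as the paper: lifting of the inflow data, reformulation as an abstract Cauchy problem for a first-order operator on $\X=L^2(\E)$ with the weighted inner product, dissipativity via edgewise integration by parts and reordering of the vertex sums, and Lumer--Phillips. Two points where you go beyond the paper's sketch are worth noting. First, where the paper invokes Jensen's inequality to bound $\sum_{e\in\E^{out}(v)} b^e |\hat z^v|^2$ by $\sum_{e\in\E^{in}(v)} b^e |z^e(v)|^2$, you use the exact algebraic identity
\begin{align*}
\sum\nolimits_{e\in\E^{in}(v)} b^e |z^e(v)|^2-\sum\nolimits_{e\in\E^{out}(v)} b^e|\hat z^v|^2
=\sum\nolimits_{e\in\E^{in}(v)} b^e |z^e(v)-\hat z^v|^2,
\end{align*}
which is valid by \eqref{eq:t3} and \eqref{eq:b} and has the advantage of directly producing the mixing-dissipation term $-\sum_{v\in\V_0}\sum_{e\in\E^{in}(v)} b^e |u^e(v)-\hat u^v|^2$ appearing in the stated energy identity, rather than only an inequality. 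Second, you explicitly verify the range condition for Lumer--Phillips by solving the resolvent ODE edge by edge and showing that the resulting fixed-point map on the vertex values is a contraction for large $\lambda$ (row sums of $M_\lambda$ bounded by $\max_e e^{-\lambda a^e\ell^e/b^e}<1$); the paper leaves this step implicit with a reference to semigroup theory, so your argument closes a gap in the written proof rather than deviating from it.
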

\begin{proof}
One can proceed with similar arguments as in the proof of Theorem \ref{thm:cd}, and we therefore only sketch the basic steps and the main differences. 

\emph{Step~1.}
The solution can again be split into two parts $u=w-z$ where $w(t)$, $t>0$ is a prescribed piecewise linear function in space that satisfies the inflow boundary conditions as well as $w^e(v)=0$ for all $v\in\V\setminus\V_\partial^{in},\ e\in\E(v)$, and the function $z$ satisfies the equations with inhomogeneous right hand side and zero inflow boundary conditions. 

\emph{Step~2.} 
We set $\X=L^2(\E)$ as before and define the dense subspace  
\begin{align*}
\D(\A)=\{z \in H^1_{pw}(\E) : z &\text{ satisfies \eqref{eq:t2}--\eqref{eq:t3} for some } \hat z \in \ell_2(\V \setminus \V_\partial^{out})\\
&\text{ with } \hat z^v=0 \text{ for } v\in\V_\partial^{in} \},
\end{align*}
on which we formally define the linear operator 
\begin{align*}
\A : \D(\A) \subset \X \to \X, 
\qquad \A z|_e = - \frac{1}{a^e} b^e \dx z^e.
\end{align*}
The transport problem \eqref{eq:t1}--\eqref{eq:t5} can then be written as an abstract evolution problem 
\begin{align}
\dt z(t) &= \A z(t) + f(t), \qquad t>0, \\ 
    z(0) &= z_0,
\end{align}
with $f(t) = a \dt w(t) + b \dx w(t)$ and $z_0=w(0)-u_0$ given. 
Due to the choice of $w$ and the assumptions on the problem data, one can guarantee that $f \in C^1([0,T];\X)$ and $z_0 \in \D(\A)$. 
Using similar arguments as in the proof of Theorem~\ref{thm:cd}, one can further show that 
\begin{align*}
(\A z, z)_\X
&= -\sum\nolimits_{e \in \E}(b^e \dx z^e, z^e)_{L^2(e)}\\
&= -\tfrac{1}{2} \sum\nolimits_{e \in \E}  b^e (|z^e(v^e_{o})|^2 - |z^e(v_i^e)|^2),   
\end{align*}
where $v_i^e$, $v_o^e$ denote the inflow and outflow vertex of the edge $e=(v_i^e,v_o^e)$. By exchanging the order of summation and using the coupling conditions \eqref{eq:t2}, we then get
\begin{align*}
(\A z, z)_\X
&= \tfrac{1}{2} \sum\nolimits_{v \in \V} \left( \sum\nolimits_{e \in \E^{out}(v)} b^e |\hat z^v|^2 -  \sum\nolimits_{e \in \E^{in}(v)} b^e |z^e(v)|^2 \right).
\end{align*}
Using the fact that $\hat z^v$ for $v \in \V_0$ is a convex combination of the values $z^e(v)$, $e \in \E^{in}(v)$, we can estimate the first term in this identity by Jensen's inequality, which yields
\begin{align*}
\sum\nolimits_{e \in \E^{out}(v)} b^e |\hat z^v|^2 
\le \sum\nolimits_{e \in \E^{in}(v)} b^e |z^e(v)|^2  
\end{align*}
for all $v \in \V_0$. As a consequence, we obtain the inequality
\begin{align*}
(\A z,z)_\X \le \tfrac{1}{2}\sum\nolimits_{v \in \V_\partial^{in}} b^e |z^e(v)|^2-\tfrac{1}{2}\sum\nolimits_{v \in \V_\partial^{out}} b^e |z^e(v)|^2 \le 0.
\end{align*}
In the last inequality we used that $z$ vanishes at the inflow vertices $v\in\V_\partial^{in}$. 
From this estimate, we can again deduce that $\A$ is dissipative, and semigroup theory guarantees the existence of a unique classical solution $z \in C^1([0,T];X) \cap C([0,T];\D(\A))$.

\emph{Steps~3 and 4.}
The existence of a unique solution $u=w-z$ for problem~\eqref{eq:t1}--\eqref{eq:t5} is now established with the same arguments as in the proof of Theorem~\ref{thm:cd}. 
Mass conservation and energy--dissipation again follow by appropriate testing; see \cite{EggerPhilippi20} for details.
\end{proof}

\subsection{Comparison of the coupling conditions} \label{sec:t}

Before we proceed, let us briefly comment on the coupling conditions. 
For the convection--diffusion problem with $\eps>0$, the number of coupling conditions at a junction $v \in \V_0$ is $|\E(v)|+1$, which suffices to guarantee continuity of the solution and conservation of mass at the junction. 
For the transport problem, on the other hand, the number of coupling conditions is $|\E^{out}(v)|+1$ which only suffices to guarantee conservation of mass at the junction and to prescribe the concentrations at the outflow edges.
The concentration $u^e(v)$, $e \in \E^{in}(v)$ on edges with flows into the junctions will however usually deviate from the mixing value $\hat u^v$. 
In this case, the mixing at pipe junctions generates dissipation, which amounts to the inequality resulting from the application of Jensen's inequality in Step~2 of the proof of the previous lemma.

\section{Asymptotic analysis}\label{sec:asymptotic}

We will now show that the solutions  of the convection--diffusion problem \eqref{eq:cd1}--\eqref{eq:cd5} 
converge to that of the transport problem \eqref{eq:t1}--\eqref{eq:t5} with rate $\mathcal{O}(\sqrt{\eps})$. 
We will closely follow the arguments of the proof for the corresponding result for a single edge, which can be found in \cite[p.\,159--166]{RoosStynesTobiska}; see \cite{Bobisud67} for the original reference.
Following \cite{RoosStynesTobiska}, we start with establishing some preliminary results that will be required for the proof. 

\subsection{Auxiliary results}

As a first step, we establish a weak maximum principle for solutions of convection-diffusion problems on networks. 
\begin{lemma} \label{lem:max}
Let $u\in C^1([0,T];L^2(\E))\cap C^0([0,T];H^1(\E) \cap H^2_{pw}(\E))$ satisfy 
\begin{alignat}{4}
a^e \dt u^e+b^e \dx u^e - \eps^e \dxx u^e &\geq 0, \qquad && e\in\E, \label{eq:max1}\\
\sum\nolimits_{e\in\E(v)} \eps^e \dx u^e(v) n^e(v) &=0,  \qquad && v\in\V_0, \label{eq:max2}\\
u(v) &\geq 0, \qquad  && v\in\V_\partial, \label{eq:max3}
\intertext{for all $0 < t < T$, as well as the initial conditions}
u^{e}(x,0) &\geq 0, \qquad && x\in e,\ e\in\mathcal{E}.\label{eq:max4}
\end{alignat}
Then the function $u$ is non-negative, i.e., $u \ge 0$ on $\E$ for all $t\in[0,T]$. 
\end{lemma}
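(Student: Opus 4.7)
The plan is to run the standard energy argument for the negative part $u^- := \max(-u, 0)$, using the two conservation conditions \eqref{eq:b} and \eqref{eq:max2} to annihilate every interior-vertex contribution. Since $u \in C^1([0,T];L^2(\E)) \cap C^0([0,T];H^1(\E) \cap H^2_{pw}(\E))$, the Stampacchia chain rule applies and gives $u^- \in C^0([0,T];H^1(\E))$ with $\dx u^- = -\chi_{\{u<0\}}\dx u$ as well as the time identity $\int_\E a\, u^- \dt u\, dx = -\tfrac{1}{2}\ddt \int_\E a(u^-)^2\, dx$. Multiplying the inequality \eqref{eq:max1} edgewise by $u^- \geq 0$ and summing yields
\begin{align*}
\tfrac{1}{2}\ddt \int_\E a(u^-)^2\, dx \leq \sum_{e \in \E} \int_e \big(b^e \dx u - \eps^e \dxx u\big) u^-\, dx.
\end{align*}

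Next I would treat the two spatial contributions on each edge separately. For the convective term, the pointwise identity $u^- \dx u = -\tfrac{1}{2}\dx((u^-)^2)$ together with the fundamental theorem of calculus leaves only boundary contributions $-\tfrac{1}{2} b^e (u^-(v))^2 n^e(v)$. For the diffusive term, integration by parts combined with $\dx u^- \cdot \dx u = -(\dx u^-)^2$ produces the favourable interior contribution $-\int_\E \eps (\dx u^-)^2\, dx$ together with junction contributions $-\eps^e u^-(v) \dx u^e(v) n^e(v)$, summed over edges and their endpoints.

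The decisive step is the vertex bookkeeping, reorganising edgewise boundary sums into sums over vertices exactly as in the dissipativity calculation in the proof of Theorem~\ref{thm:cd}. At an inner vertex $v \in \V_0$, the convective sum factors as $(u^-(v))^2 \sum_{e \in \E(v)} b^e n^e(v)$ and vanishes by \eqref{eq:b}; the diffusive sum factors as $u^-(v) \sum_{e \in \E(v)} \eps^e \dx u^e(v) n^e(v)$ and vanishes by the hypothesis \eqref{eq:max2}. At a boundary vertex $v \in \V_\partial$, the hypothesis $u(v) \geq 0$ forces $u^-(v) = 0$, eliminating both remaining contributions. Collecting everything I obtain
\begin{align*}
\tfrac{1}{2} \ddt \int_\E a(u^-)^2\, dx \leq -\int_\E \eps (\dx u^-)^2\, dx \leq 0,
\end{align*}
and since $u^-(\cdot, 0) \equiv 0$ by \eqref{eq:max4}, monotonicity of $t \mapsto \int_\E a(u^-)^2\, dx$ forces $u^-(\cdot, t) \equiv 0$ for all $t \in [0,T]$, i.e., $u \geq 0$.

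The main obstacle I anticipate is not the energy estimate itself but the junction accounting: one must recognise that the two conservation conditions \eqref{eq:b} and \eqref{eq:max2} are precisely what is needed to cancel the interior-vertex contributions. Without the purely diffusive Kirchhoff condition \eqref{eq:max2}, the diffusive junction terms would in general be indefinite in sign, and the classical test function $u^-$ would fail to close the estimate.
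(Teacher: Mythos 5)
Your proof is correct and follows essentially the same route as the paper: an energy argument with the negative part of $u$ as test function, integration by parts on each edge, and cancellation of the junction terms via the flow conservation condition \eqref{eq:b}, the diffusive Kirchhoff condition \eqref{eq:max2}, and the sign condition at boundary vertices. The paper phrases this with $w=\min(0,u)$ and restricts to the set $\E_-(t)=\{u<0\}$ before integrating in time, but the substance and the vertex bookkeeping are identical to yours.
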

\begin{proof}
We multiply the differential inequality \eqref{eq:max1} by the test function $w:=\min(0,u)\leq 0$, integrate over all edges $e\in\E$, and use integration-by-parts for the spatial derivative terms, similar as in the proof of Theorem~\ref{thm:cd} and \ref{thm:t}. This leads to 
\begin{align*}
0 &\ge (a\dt u,w)_{L^2(\E)}+(b\dx u,w)_{L^2(\E)}-(\eps\dxx u,w)_{L^2(\E)} \\
  &= (a \dt u, w)_{L^2(\E)} - (b u, \dx w)_{L^2(\E)} + (\eps \dx u, \dx w)_{L^2(\E)},
\end{align*}
where we used continuity of $u$ and $w$ across junctions, the conservation condition \eqref{eq:b} for the flow rates, as well as \eqref{eq:max2} and the fact that $u \ge 0$ on the boundary, and hence $w=0$ at vertices $v \in \V_\partial$.
Next, observe that $w(t) \equiv 0$, and thus also $\dx w(t) \equiv 0$, on the set where $u$ is non-negative, 
and $w \equiv u$ on the complement $\E_-(t)=\{x : u(x,t) < 0\}$. 
From this and the previous inequality, we immediately deduce that
\begin{align*}
0 &\ge (a \dt u, u)_{L^2(\E_-)} - (b u, \dx u)_{L^2(\E_-)} + (\eps \dx u, \dx u)_{L^2(\E_-)} \\
  &\ge (a \dt u,u)_{L^2(\E_-)},
\end{align*}
where we used that $b u \dx u = \frac{b}{2} \dx |u|^2$, the positivity of $b$, and the fact that possible coupling and boundary terms appearing when integrating this expression drop out due to continuity of $u$ across junctions; furthermore, we employed the flow conservation condition of $b$ and the fact that $u=0$ on the boundary of $\E_-$ due to its definition and \eqref{eq:max3}. 
By integration in time and using $u=0$ on the boundary of $\E_-(t)$ as well as $u(0) \ge 0$, we then obtain 
\begin{align*}
0 \ge \int_0^t (a \dt u(s), u(s))_{\E_-(s)} ds 
  \ge \frac{1}{2} \int_{\E_-(t)} a |u(t)|^2 dx.
\end{align*}
This further implies that $\E_-(t)=\emptyset$, and hence $u(t) \ge 0$ for all $0 \le t \le T$.  
\end{proof}

Using the weak maximum principle, we can show the following uniform bounds.
\begin{lemma}\label{lem:bound}
Let Assumption~\ref{ass:1} hold. Then the solution of problem \eqref{eq:cd1}--\eqref{eq:cd5} is uniformly bounded by $|u_\eps(x,t)| + |\dt u_\eps(x,t)| \le C_u$ for all $x \in \E$, $t \in [0,T]$ with $C_u$ independent of $\eps$.
\end{lemma}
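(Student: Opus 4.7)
The plan is to establish both bounds by applying the weak maximum principle (Lemma~\ref{lem:max}) twice: once to $u_\eps$ itself to get the uniform $L^\infty$ bound, and then to $\dt u_\eps$ to control the time derivative.

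First I would check that $u_\eps$ actually falls under the scope of Lemma~\ref{lem:max}. The delicate point is verifying the purely diffusive coupling condition~\eqref{eq:max2}: using continuity \eqref{eq:cd2} together with Assumption~\ref{ass:1}, the convective part of~\eqref{eq:cd3} factors as $\hat u_\eps^v \sum_{e\in\E(v)} b^e n^e(v) = 0$, so the residual diffusive flux $\sum_{e\in\E(v)} \eps^e \dx u_\eps^e(v) n^e(v)$ vanishes automatically at every inner vertex. Constants solve the PDE and trivially satisfy the coupling condition. Thus, setting
\begin{equation*}
M := \max\bigl(\|u_0\|_{L^\infty(\E)},\ \|g\|_{L^\infty(0,T;\ell_2(\V_\partial))}\bigr),
\end{equation*}
the two functions $M - u_\eps$ and $M + u_\eps$ both satisfy the hypotheses of Lemma~\ref{lem:max}, which directly yields $|u_\eps(x,t)| \le M$ uniformly in $\eps$.

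Next I would differentiate the entire system in $t$ and set $v_\eps = \dt u_\eps$. Since $a^e$, $b^e$, $\eps^e$ are time-independent, $v_\eps$ satisfies the homogeneous equation~\eqref{eq:cd1} on each edge, the continuity condition~\eqref{eq:cd2} with nodal values $\dt \hat u_\eps^v$, and the differentiated flux condition~\eqref{eq:cd3}; on $\V_\partial$ it obeys $v_\eps(v,t) = \dt g^v(t)$. Hence $v_\eps$ again fits the framework of Lemma~\ref{lem:max} (after the same reduction of the convective flux at inner vertices as above), and comparing with $\pm M'$ where $M'$ bounds both $\|\dt g\|_{L^\infty}$ and $\|v_\eps(\cdot,0)\|_{L^\infty(\E)}$ gives the desired uniform bound on $\dt u_\eps$.

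The main obstacle is bounding the initial value
\begin{equation*}
v_\eps(x,0) = \tfrac{1}{a^e}\bigl(\eps^e \dxx u_0^e - b^e \dx u_0^e\bigr)
\end{equation*}
uniformly in $\eps$. Since $\eps \in (0,1]$, this is controlled by $\|u_0\|_{W^{2,\infty}_{pw}(\E)}$, so I would invoke (or strengthen to) the piecewise-$W^{2,\infty}$ regularity of $u_0$ together with the compatibility of $u_0$ and $g$ at $t=0$ required by Theorem~\ref{thm:cd}; this is precisely the hypothesis that no initial layer is present, as alluded to after equation~\eqref{eq:5} in the introduction. Under this assumption the bound $M'$ is independent of $\eps$, and the maximum-principle argument closes. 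Finally, summing the two $L^\infty$ estimates yields $|u_\eps(x,t)| + |\dt u_\eps(x,t)| \le C_u$ with $C_u$ independent of $\eps$, as claimed.
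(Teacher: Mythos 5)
Your proof takes essentially the same route as the paper: a comparison-function/maximum-principle argument gives the uniform bound on $u_\eps$, and then the time-differentiated system (with boundary data $\dt g^v$ and initial datum $\tfrac{1}{a^e}(\eps^e\dxx u_0^e - b^e\dx u_0^e)$) is treated identically to bound $\dt u_\eps$. Your explicit verification that the purely diffusive flux condition \eqref{eq:max2} follows from \eqref{eq:cd2}, \eqref{eq:cd3} and \eqref{eq:b}, and your observation that a uniform pointwise bound on the initial datum of $\dt u_\eps$ really requires $u_0$ to be piecewise $W^{2,\infty}$ rather than merely $H^2_{pw}$, are details the paper glosses over, but the underlying argument is the same.
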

\begin{proof}
The boundedness of $u_\eps$ follows from the maximum principle with the usual arguments; see e.g., \cite[Ch.~7]{Evans}. 
By linearity of the problem, one can further see that $z_\eps=\dt u_\eps$ again solves \eqref{eq:cd1}--\eqref{eq:cd5}, but with  
with boundary data $z(v)=\dt g^v$ on $\V_\partial$ and initial data $z_\eps^e(0)=\dt u_\eps^e(0) = -\frac{1}{a^e} \left( b^e \dx u_0^e - \eps^e \dxx u_0^e \right)$. 
The boundedness of $z_\eps=\dt u_\eps$ then follows from the assumptions on the problem data with the same reasoning as above.
\end{proof}

\begin{lemma}\label{lem:boundderiv}
Let Assumption~\ref{ass:1} hold and $u_\eps$ denote the solution of problem \eqref{eq:cd1}--\eqref{eq:cd5}. Then 
\begin{align*}
|\dx u_\eps^e(v_i^e,t)| \le K \quad \text{for all } t\in(0,T),
\end{align*}
for all edges $e=(v_i^e,v_o^e)$ with uniform constant $K$ independent of $e \in \E$ and $\eps$.
\end{lemma}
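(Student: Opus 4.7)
The plan is to prove the bound on each single edge via a barrier function argument, using the classical maximum principle for scalar convection--diffusion on an interval (in place of the network version from Lemma~\ref{lem:max}). Fix an arbitrary edge $e = (v_i^e, v_o^e) \in \E$ and identify it with the interval $[0,\ell^e]$, with $v_i^e$ corresponding to $x=0$; by the orientation convention $b^e > 0$. From Lemma~\ref{lem:bound}, the trace $\hat u_\eps^{v_i^e}(t) = u_\eps^e(0,t)$ satisfies $|\hat u_\eps^{v_i^e}(t)| \le C_u$ and $|\dt \hat u_\eps^{v_i^e}(t)| \le C_u$ with $C_u$ independent of $\eps$, while the regularity hypothesis $u_0 \in H^1(\E)\cap H^2_{pw}(\E)$ entails that $u_0^e$ is Lipschitz continuous on $e$ with some constant $L_0$ depending only on $u_0$.

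I would then introduce upper and lower barriers of the form
\begin{align*}
\phi^\pm(x,t) := \hat u_\eps^{v_i^e}(t) \pm M\, x,
\end{align*}
with $M > 0$ to be chosen. Applying the edge operator $\mathcal{L} v := a^e \dt v + b^e \dx v - \eps^e \dxx v$ gives $\mathcal{L}(\phi^+ - u_\eps^e) = a^e \dt \hat u_\eps^{v_i^e}(t) + b^e M \ge b^e M - a^e C_u$, which is non-negative as soon as $M \ge a^e C_u/b^e$. On the parabolic boundary of $(0,\ell^e)\times(0,T)$, the difference $\phi^+ - u_\eps^e$ vanishes at $x=0$, is bounded below by $M\ell^e - 2C_u$ at $x=\ell^e$, and, using the compatibility $\hat u_\eps^{v_i^e}(0) = u_0^e(0)$ together with the Lipschitz bound on $u_0^e$, satisfies $\phi^+(x,0) - u_0^e(x) \ge (M - L_0)x$ at $t=0$. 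Choosing $M$ larger than $\max\{a^e C_u/b^e,\, 2C_u/\ell^e,\, L_0\}$ makes $\phi^+ - u_\eps^e \ge 0$ on the full parabolic boundary, and the standard maximum principle on $[0,\ell^e]\times[0,T]$ then yields $\phi^+ \ge u_\eps^e$ on the entire cylinder. Since $\phi^+(0,t) = u_\eps^e(0,t)$, passing to the one-sided difference quotient at $x=0$ gives $\dx u_\eps^e(v_i^e,t) \le M$, and the symmetric argument with $\phi^-$ provides the matching lower bound.

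The main issue to keep track of is that $M$ can be chosen uniformly in $\eps$ and across the (finitely many) edges of the network. Independence from $\eps$ is already guaranteed by Lemma~\ref{lem:bound}, which supplies an $\eps$-uniform pointwise estimate on $u_\eps$ and $\dt u_\eps$; uniformity across edges then follows because the network has finitely many edges and the parameters $a^e$, $b^e$, $\eps^e \le 1$ and $\ell^e$ all satisfy uniform positive bounds by Assumption~\ref{ass:1}. The only mild technicality is the treatment of the corner $(x,t) = (0,0)$, which is absorbed by the Lipschitz regularity of $u_0^e$ coming from the regularity assumptions in Theorem~\ref{thm:cd}.
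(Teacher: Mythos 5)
Your proposal is correct and is essentially the paper's own argument: the paper likewise forms $w^e = Kx + \hat u_\eps^{v_i^e}(t) - u_\eps^e(x,t)$, verifies $\mathcal{L}w^e \ge 0$ and nonnegativity on the parabolic boundary for $K \ge \max\{a^eC_u/b^e,\ \max|\dx u_0|,\ 2C_u/\min_e\ell^e\}$, and applies the (single-edge) maximum principle to conclude $u_\eps^e(x,t)-\hat u_\eps^{v_i^e}(t)\le Kx$ and hence the derivative bound at $v_i^e$. Your version is if anything slightly more careful, since you make the lower barrier $\phi^-$ explicit where the paper only writes out the upper one before asserting the two-sided bound.
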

\begin{proof}
For every edge $e=(v_i^e,v_o^e) \simeq (0,\ell^e)$, we define 
$w^e(x,t):=  K x + \hat u_\eps^{v_i^e}(t)-u_\eps^e(x,t)$,
where $K$ is a positive constant to be chosen later. 
From Lemma~\ref{lem:bound} we know that $u_\eps$ and $\dt u_\eps$, and hence by \eqref{eq:cd2} also $\hat u_\eps^v$ and $\dt \hat u_\eps^v$, are bounded independently of $\eps$ by a uniform constant $C_u$. 
Then for any 
$K\geq \frac{a^e}{b^e} C_u$, 
we have 
\begin{align*}
a^e \dt w^e + b^e \dx w^e - \eps^e \dxx w^e 
  = a^e \dt \hat u_\eps^{v_i^e} + b^e K  \geq 0.
\end{align*}
If we further assume that $K \geq \max_{x\in \E} |\dx u_0(x)|$, then 
\begin{align*}
w^e(x,0) 
&= K x+\hat u_\eps^{v_i^e}(0)-u_\eps^e(x,0) 
 = K x+u_0^e(0)-u_0^e(x)\\
&= K x - \int_0^x \dx u_0^e(s) ds 
 \geq Kx-\max\nolimits_{x\in e} |\dx u_0(x)| x
 \geq 0.
\end{align*}
Using Lemma~\ref{lem:bound}, we may further assume that $K \geq 2 C_u / \min_{e \in \E} \ell^e$, and deduce that
\begin{align*}
w^e(v_i^e,t)=0 
\qquad \text{and} \qquad 
w^e(v_o^e,t)=K\cdot \ell^e+\hat u_\eps^{v_i^e}(t)-u_\eps^e(v_o^e,t)\geq 0,
\end{align*}
since $u_\eps$ is assumed to be continuous across network junctions. The weak maximum principle then yields $w^e\geq 0$ for all $t\in [0,T]$, and consequently
\begin{align*}
u_\eps^e(x,t)-\hat{u}_\eps^{v_i^e}(t) \leq K x.    
\end{align*}
This implies that $|\dx u_\eps^e(v_i^e,t)|\leq K$ for all $t\in(0,T)$,
and from the construction, one can further see that the constant $K$ can be chosen independent of $e \in \E$ and of $\eps$.
\end{proof}

\subsection{Asymptotic estimates}

With the auxiliary results derived in the previous section, we are now in the position to prove the main result of the manuscript.

\begin{theorem} \label{thm:est}
Let Assumption~\ref{ass:1} hold. Further let $u_\eps$ be the solution of problem \eqref{eq:cd1}--\eqref{eq:cd5} 
and $u$ be the solution of the corresponding limit problem \eqref{eq:t1}--\eqref{eq:t6}. 
Then 
\begin{align} \label{eq:rate}
\| u_\eps-u\|_{L^\infty(0,T;L^2(\E))} \leq C \sqrt{\eps},
\end{align}
with a constant $C$ that is independent of the diffusion parameter $0<\eps\le 1$. 
\end{theorem}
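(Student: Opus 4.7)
The plan is to adapt the classical boundary-layer argument of \cite{RoosStynesTobiska} to the network setting. For each edge $e=(v_i^e, v_o^e)$, I would introduce an outflow boundary layer
\begin{align*}
\psi_\eps^e(x,t) := c^e(t)\, e^{-b^e(\ell^e-x)/\eps^e}, \qquad c^e(t) := \hat u^{v_o^e}(t) - u^e(v_o^e,t),
\end{align*}
where $\hat u^v$ is the nodal value of the transport problem, defined on all of $\V$ by \eqref{eq:t3}, \eqref{eq:t4} and \eqref{eq:t6}. The approximant $\tilde u_\eps := u + \sum_{e \in \E} \psi_\eps^e$ is then compared to $u_\eps$ via the triangle inequality $\|u_\eps - u\|_{L^2(\E)} \le \|u_\eps - \tilde u_\eps\|_{L^2(\E)} + \|\tilde u_\eps - u\|_{L^2(\E)}$. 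The second term is bounded by $O(\sqrt\eps)$ almost for free: each profile has amplitude $O(1)$ (uniformly in $\eps$ by Lemma~\ref{lem:bound}) on a layer of width $O(\eps)$.

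For the first term I would verify that $\tilde u_\eps$ is an \emph{approximate} solution of \eqref{eq:cd1}--\eqref{eq:cd5}. A direct computation shows that the pointwise PDE residual $\mathcal R^e := a^e \dt \tilde u_\eps^e + b^e \dx \tilde u_\eps^e - \eps^e \dxx \tilde u_\eps^e$ collapses to $-\eps^e \dxx u^e + a^e \dt c^e(t)\, e^{-b^e(\ell^e-x)/\eps^e}$, both terms contributing only $O(\sqrt\eps)$ to $\|\mathcal R\|_{L^2(\E)}$. At every inner vertex $v \in \V_0$, the trace $\tilde u_\eps^e(v)$ equals $\hat u^v$ exactly on each inflow edge $e \in \E^{in}(v)$ (by the very definition of $c^e$) and differs from $\hat u^v$ only by an $O(e^{-c/\eps})$ term on outflow edges, so continuity across junctions holds up to exponential accuracy. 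Combining the transport mass-balance \eqref{eq:t3} with the flow-conservation condition \eqref{eq:b}, the coupling residual $\sum_{e \in \E(v)}(b^e \tilde u_\eps^e(v) - \eps^e \dx \tilde u_\eps^e(v))\, n^e(v)$ cancels at leading order, leaving only a term of size $O(\eps) + O(e^{-c/\eps})$. Boundary values are matched exactly at $\V_\partial^{out}$ and to exponential accuracy at $\V_\partial^{in}$. Since the compatibility of $u_0$ with \eqref{eq:cd2}--\eqref{eq:cd4} forces $c^e(0)=0$, one has $\tilde u_\eps(0) = u_0 = u_\eps(0)$.

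The core step is the energy estimate for $r_\eps := u_\eps - \tilde u_\eps$. Because $r_\eps(0)=0$, testing the equation for $r_\eps$ with $r_\eps$ and repeating the integration-by-parts chain from Step~2 of the proof of Theorem~\ref{thm:cd} yields
\begin{align*}
\tfrac{1}{2}\ddt \|a^{1/2} r_\eps\|_{L^2(\E)}^2 + \|\eps^{1/2} \dx r_\eps\|_{L^2(\E)}^2 = -(\mathcal R, r_\eps)_{L^2(\E)} + \mathcal J_\eps(t),
\end{align*}
where $\mathcal J_\eps(t)$ collects all junction and boundary contributions. The volume term is controlled by Cauchy--Schwarz since $\|\mathcal R\|_{L^2(\E)} = O(\sqrt\eps)$, while $\mathcal J_\eps$ splits into a product of exponentially small traces/jumps of $r_\eps$ against derivative traces (for which the uniform bound of Lemma~\ref{lem:boundderiv} on $\dx u_\eps$ is crucial) and a product of the $O(\eps)$ coupling residual with $r_\eps(v)$, each absorbed into $\|\eps^{1/2}\dx r_\eps\|_{L^2(\E)}^2$ via Young's inequality or handled directly. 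Gronwall's inequality then gives $\|r_\eps\|_{L^\infty(0,T;L^2(\E))} = O(\sqrt\eps)$, and \eqref{eq:rate} follows.

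The main obstacle I anticipate is the bookkeeping in $\mathcal J_\eps$: the exponentially small continuity mismatches of $\tilde u_\eps$ across interior junctions couple with derivative traces $\dx\psi_\eps^e(v_o^e) = c^e b^e/\eps^e$ of size $O(1/\eps)$, so naive estimates would produce $O(1)$ boundary fluxes. Their control requires combining the coupling condition \eqref{eq:cd3} satisfied by $u_\eps$, the leading-order transport balance \eqref{eq:t3} built into $\tilde u_\eps$, and the dissipation $\|\eps^{1/2}\dx r_\eps\|_{L^2(\E)}^2$ in a concerted fashion. It is precisely for this reason that the jump $c^e$ must be defined using the known transport mixture $\hat u^v$ and not the a-priori unknown $\hat u_\eps^v$, matching the heuristic emphasised in the abstract.
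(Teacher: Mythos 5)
Your proposal follows essentially the same route as the paper's proof: the boundary layer ansatz $c^e(t)\,e^{-b^e(\ell^e-x)/\eps^e}$ with amplitude $c^e=\hat u^{v_o^e}-u^e(v_o^e)$ is exactly the function $w_\eps^e$ used there, the triangle-inequality splitting and the Gronwall energy estimate for $r_\eps=u_\eps-u-w_\eps$ coincide with the paper's Steps 1--4, and you correctly identify both the role of Lemma~\ref{lem:boundderiv} and the crucial point that the layer amplitude must be built from the known transport mixture $\hat u^v$ rather than $\hat u_\eps^v$. The plan is sound and matches the published argument.
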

\begin{proof}
The proof follows the arguments given in \cite[p.\,159--166]{RoosStynesTobiska}.  Since we require particular boundary layer functions for junctions $v \in \V_0$, 
we present the result in detail.

\emph{Step~1.} 
For every $e\in \E$ with $e=(v_i^e,v_o^e) \simeq (0,\ell^e)$, we define a boundary layer function 
\begin{align}\label{eq:blf}
w_\eps^e(x,t) = \left( \hat u ^{v_o^e}(t)-u^e(v_o^e,t) \right) e^{-b^e (\ell^e-x)/\eps^e}.
\end{align} 
%
From this particular construction, we immediately obtain
\begin{align}\label{eq:blf-pde}
b^e \dx w_\eps^e - \eps^e \dxx w_\eps^e = 0, 
\end{align}
and $\| w_\eps\|_{L^\infty(0,T;L^2(\E))} \leq C\sqrt{\eps}$, 
where we used that $u$, and thus also $\hat u^v$, are uniformly bounded according to Theorem~\ref{thm:t}. 
Further estimates for $w_\eps$ and its spatial derivatives can be found in \cite{Dobrowolski97}. 
The error between $u_\eps$ and $u$ can then be split into 
\begin{align*}
\| u_\eps-u \|_{L^\infty(0,T;L^2(\E))} 
&\leq  \| u_\eps - u -w_\eps \|_{L^\infty(0,T;L^2(\E))} + 
 \| w_\eps \|_{L^\infty(0,T;L^2(\E))}\\
&\leq \| u_\eps - u - w_\eps \|_{L^\infty(0,T;L^2(\E))} + C \sqrt{\eps}.
\end{align*}

\emph{Step~2.} 
For ease of notation, we introduce $\eta_\eps:=u_\eps-u-w_\eps$ and investigate the values of $\eta_\eps$ at time $t=0$ and at the vertices $v \in \V$ of the network.
For $t=0$, we have
\begin{align} \label{eta:init}
\eta_\eps^e(x,0)
&= u_\eps^e(x,0) - u^e(x,0) - \big( \hat u^{v_o^e}(0) - u^e(v_o^e,0) \big) e^{-b^e (\ell^e-x)/\eps^e} \\
&= u_0^e(x) - u_0^e(x) - \big( u^e_0(v_o^e) - u_0^e (v_o^e) \big) e^{-b^e (\ell^e-x)/\eps^e} 
 = 0,\nonumber
\end{align}
where we used that $u_\eps$ and $u$ have the same initial value $u_0$ which is continuous across junctions $v\in\mathcal{V}_0$ and  $g^{v}(0)=u_0(v)$ for $v \in \V_\partial$ due to the compatibility conditions of initial and boundary values. 
For inflow boundary vertices $v \in \V_\partial^{in}$ and $e=(v,v_o^e)$, we obtain 
\begin{align}\label{eta:inflow}
\eta_\eps^e(v,t)
&=g^v(t) - g^{v}(t) - \big(\hat u^{v_o^e}(t) - u^e(v_o^e,t)\big) e^{-b^e \ell^e/\eps^e} \leq C' \eps,
\end{align}
where $C'$ is a constant independent of $\eps$.
For outflow boundary vertices $v\in\V_\partial^{out}$ and the corresponding edge $e=(v_i^e,v)$, we obtain 
\begin{align}\label{eta:outflow}
\eta_\eps^e(v,t)
= g^{v}(t) - u^e(v,t) - \big( g^{v}(t) - u^e(v,t) \big)
= 0.
\end{align}
At inner vertices $v\in\mathcal{V}_0$, on the other hand, there holds
\begin{alignat}{5}
\eta_\eps^e(v,t) &= \hat{u}_\eps^v(t)-\hat u^v(t), \quad && e=(v_i^e,v) \in \E^{in}(v), \label{eta:inner-in} \\
\eta_\eps^e(v,t) &= \hat u_\eps^v(t) - \hat u^v(t) - \big( \hat u^{v_o^e}(t) - u^e(v_o^e,t) \big) \, e^{-b^e \ell^e/\eps^e}, \quad &&  e=(v,v_o^e) \in \E^{out}(v).\label{eta:inner-out}
\end{alignat}

\emph{Step~3.}
Inserting $\eta_\eps$ into the convection diffusion equation \eqref{eq:cd1} and testing with $\eta_\eps$ yields
\begin{align*}
(a \dt \eta_\eps,\eta_\eps)_{L^2(\E)}
&=-(b \dx \eta_\eps,\eta_\eps)_{L^2(\E)}+ (\eps \dxx \eta_\eps,\eta_\eps)_{L^2(\E)}
+ (\eps\dxx u,\eta_\eps)_{L^2(\E)} \\&\qquad \qquad \qquad \qquad \, \;
- (a\dt w_\eps,\eta_\eps)_{L^2(\E)}
= (i) + (ii) + (iii) + (iv),
\end{align*}
where we used the identity \eqref{eq:blf-pde}. 
The individual terms are now estimated separately.

\emph{Step~3(i).}
The first term can be transformed into
\begin{align*}
(i) 
 = -\sum\nolimits_{v\in\V} \sum\nolimits_{e\in\E(v)} \tfrac{1}{2} b^e |\eta_\eps^e(v)|^2 n^e(v)
 = \sum\nolimits_{v \in \V} (*) .
\end{align*}
For internal vertices $v\in\V_0$, using \eqref{eta:inner-in}--\eqref{eta:inner-out} we obtain 
\begin{align*}
(*)
&= \sum\nolimits_{e\in\E^{out}(v)} \tfrac{1}{2} b^e \big(\hat u_\eps^v - \hat u ^v - \big( \hat u^{v_o^e} - u^e(v_o^e) \big) e^{-b^e \ell^e/\eps^e}\big)^2
\\
& \qquad \qquad 
-\sum\nolimits_{e\in\E^{in}(v)} \tfrac{1}{2} b^e (\hat u_\eps^v - \hat u ^v )^2\\
&= \sum\nolimits_{e\in\E^{out}(v)} \tfrac{1}{2} b^e \big(\hat u_\eps^v - \hat u^v\big)^2 - \sum\nolimits_{e\in\E^{in}(v)}\tfrac{1}{2} b^e \big(\hat u_\eps^v-\hat u^v\big)^2 \\
& \qquad \qquad -\sum\nolimits_{e\in\E^{out}(v)} b^e \big(\hat u_\eps^v - \hat u^v\big) \big(\hat u^{v_o^e} - u^e(v_o^e)\big) e^{-b^e \ell^e/\eps^e}\\
& \qquad \qquad +\sum\nolimits_{e\in\E^{out}(v)} \tfrac{1}{2} b^e \big(\hat u^{v_o^e} - u^e(v_o^e) \big)^2 e^{-2 b^e \ell^e/\eps^e}\\
&\leq C \sum\nolimits_{e\in\E^{out}(v)} b^e (e^{-b^e \ell^e/\eps^e} + e^{-2 b^e \ell^e/\eps^e}) 
\leq C'\eps.
\end{align*}
Here we additionally used the conservation property \eqref{eq:b} of the volume flow rates and the uniform boundedness of $u_\eps$ stated in Lemma~\ref{lem:bound}.
For inflow boundary vertices $v\in\V_\partial^{in}$, we know from \eqref{eta:inflow} that $\eta_\eps^e(v,t)\leq C\eps$ on $e=(v,v_o^e)$, and hence
$(*) 
\leq C b^e \eps$,
and for outflow boundary vertices $v\in\V_\partial^{out}$, we have 
$\eta_\eps^e(v,t)=0$ by \eqref{eta:outflow}, and thus $(*)=0$ there. 
In summary, we thus obtain 
$(i) \le C'' \eps$
with constant $C''$ independent of $\eps$. 

\emph{Step~3(ii).}
Using integration-by-parts, we can transform the second term into
\begin{align*}
(ii) 
&= -\sum\nolimits_{e \in \E} (\eps^e \dx \eta_\eps^e,\dx \eta_\eps^e)_{L^2(e)} + \sum\nolimits_{v \in \V} \sum\nolimits_{e \in \E(v)} \eps^e \dx \eta_\eps^e(v) \eta_\eps^e(v) n^e(v) \\ 
&\leq \sum\nolimits_{v \in \V} \sum\nolimits_{e \in \E(v)} \eps^e \dx \eta_\eps^e(v) \eta_\eps^e(v) n^e(v)  
 = \sum\nolimits_{v \in \V} (**).
\end{align*}
At inner vertices $v\in\V_0$, we again use \eqref{eta:inner-in}--\eqref{eta:inner-out} to obtain
\begin{align*}
(**) 
=&\sum\nolimits_{e\in\E(v)} \eps^e \dx u_\eps^e(v) (\hat u_\eps^v - \hat u^v) n^e(v) 
- \sum\nolimits_{e\in\E^{out}(v)} \eps^e \dx \eta_\eps^e(v) w_\eps^e(v) \\
&-\sum\nolimits_{e\in\E(v)} \eps^e (\dx u^e(v) + \dx w_\eps^e(v)) (\hat u_\eps^v - \hat u^v) n^e(v)
 = (a) + (b) + (c). 
\end{align*}
Due to \eqref{eq:cd3}, the term (a) vanishes. 
Inserting the definition of $\eta_\eps$, we further obtain
\begin{align*}
(b)	
&=-\sum\nolimits_{e\in\E^{out}(v)} \eps^e \big(\dx u_\eps^e(v) - \dx u^e(v) - \dx w_\eps^e(v)\big) w_\eps^e(v).
\end{align*}
From Lemma~\ref{lem:boundderiv}, we know that $\dx u_\eps^e(v)$ is bounded uniformly for all $e\in\mathcal{E}^{out}(v)$, and the derivative $\dx u^e$ is also bounded independently of $\eps$.  
Furthermore, the spatial derivative $\dx w_\eps^e(v)$ can be bounded by $(C/\eps^e) e^{-b^e \ell^e/\eps^e}$ for all $e\in\E^{out}(v)$; see  \eqref{eq:w-deriv-out}. 
From these bounds we conclude that $(b) \le C (\eps^e+1) e^{-b^e \ell^e/\eps^e} \leq C' \eps$ with constant $C'$ independent of $\eps$.
To estimate the term (c), we observe that 
$\dx u$ and $\hat u_\eps$ are bounded independently of $\eps$; see Theorem~\ref{thm:t} and Lemma~\ref{lem:bound}. 
Consequently,
\begin{align*}
(c_1) = -\sum\nolimits_{e\in\E(v)} \eps^e \dx u^e(v) ( \hat u_\eps^v - \hat u^v) n^e(v) 
\leq C \eps.
\end{align*}
For the spatial derivative $\dx w_\eps^e(v)$, we further obtain
\begin{alignat}{5}
\dx w_\eps^e(v)&=\frac{b^e}{\eps^e}(\hat u^v - u^e(v)), \qquad && e \in \E^{in}(v), \label{eq:w-deriv-in} \\
\dx w_\eps^e(v) &=\frac{b^e}{\eps^e}(\hat{u}^{v_o^e}-u^e(v_o^e)) e^{-b^e \ell^e/\eps^e}, \qquad && e \in \E^{out}(v), \label{eq:w-deriv-out}
\end{alignat}
which allows us to rewrite
\begin{align*}
(c_2) 
&= 
-\sum\nolimits_{e\in\E(v)} \eps^e \dx w_\eps^e(v) (\hat u_\eps^v - \hat u^v) n^e(v) \\
&=-\sum\nolimits_{e\in\E^{in}(v)} b^e (\hat u ^v - u^e(v)) (\hat u_\eps^v - \hat u^v) \\
&\qquad \qquad 
+\sum\nolimits_{e\in\E^{out}(v)} b^e (\hat u^{v_o^e}-u^e(v_o^e)) e^{-b^e \ell^e/\eps^e} (\hat u_\eps^v-\hat u ^v).
\end{align*}
Now, the first term on the right hand side vanishes due to the coupling conditions \eqref{eq:t2}--\eqref{eq:t3} and the conservation condition \eqref{eq:b} for the flow rates.
The uniform bounds for $u$, $\hat u^v$ and $u_\eps$, $\hat u_\eps^v$ then allow to bound $(c_2) \le C' \eps$, and hence $(c) \le C'' \eps$ with $C''$ independent of $\eps$.
By combination of the estimates for (a), (b), and (c), we obtain $\sum\nolimits_{v\in\V_0}(**) \le C \eps$. 
For the remaining boundary vertices $v\in\V_\partial$, we use \eqref{eta:inflow}--\eqref{eta:outflow} to see that
\begin{align*}
\sum\nolimits_{v\in\V_\partial}(**)=\sum\nolimits_{v\in\V_\partial^{in}} \eps^e\dx \eta_\eps^e(v)\eta^e(v) n^e(v) \leq C'\eps,    
\end{align*}
since $\dx \eta_\eps^e(v)$, $v\in\V_\partial^{in}$ is bounded independently of $\eps$ by Theorem \ref{thm:t}, Lemma \ref{lem:boundderiv}, and \eqref{eq:w-deriv-out}.
In summary, we thus obtain $(ii) \le C\eps$ with a constant $C$ independent of $\eps$.

\emph{Step~3(iii).}
Integration-by-parts and Young's inequality yield
\begin{align*}
(iii) 
&= -(\eps \dx u,\dx\eta_\eps)_{L^2(\E)} + \sum\nolimits_{v \in \V} \sum\nolimits_{e \in \E(v)} \eps^e \dx u^e(v)  \eta_\eps^e(v) n^e(v) \\
&\leq \tfrac{1}{2} \|\eps^{1/2} \dx u\|_{L^2(\E)}^2 +\tfrac{1}{2} \| \eps^{1/2} \dx \eta_\eps \|_{L^2(\E)}^2
+ \sum\nolimits_{v\in\V} \sum\nolimits_{e\in\E(v)} \eps^e \dx u^e(v) \eta_\eps^e(v) n^e(v).
\end{align*}
The first term is bounded by $C \eps$, the second term can be absorbed into (ii), and the boundary terms can be estimated by $C \eps$, since $\dx u$ and $\eta_\eps$ are uniformly bounded; see Theorem~\ref{thm:t} and \eqref{eta:inflow}--\eqref{eta:inner-out}. In summary, we thus obtain 
$(iii) \le C\eps$.

\emph{Step~3(iv).}
Using Young's inequality, we have
\begin{align*}
-(a\dt w_\eps,\eta_\eps)_{L^2(\E)} 
\leq \tfrac{1}{2} \|a^{1/2} \dt w_\eps\|_{L^2(\E)}^2 + \tfrac{1}{2} \|a^{1/2} \eta_\eps\|_{L^2(\E)}^2.
\end{align*}
By the uniform bounds for $\dt u$ and $\dt u_\eps$, 
we can estimate the first term by
\begin{align*}
\|a^{1/2} \dt w_\eps \|_{L^2(e)}^2 
&=\int_0^{\ell^e} a^e \big(\dt \hat u^{v_o^e}(t)-\dt u^e(v_o^e,t) \big)^2
e^{-2 b^e(\ell^e-x)/\eps^e}\, dx
\leq C'\eps,
\end{align*}
and since the graph is finite, this estimate translates to the whole 
network.

\emph{Step~4.}
By combination of the estimates for the terms $(i)$--$(iv)$, we finally obtain 
\begin{align*}
\tfrac{1}{2} \tfrac{d}{dt} \|a^{1/2} \eta_\eps\|_{L^2(\E)}^2 
&=(a \dt \eta_\eps,\eta_\eps)_{L^2(\E)}
 \leq C\eps + \tfrac{1}{2} \|a^{1/2} \eta_\eps\|_{L^2(\E)}^2.
\end{align*}
An application of Gronwall's lemma then immediately yields 
\begin{align*}
\|\eta_\eps(t)\|_{L^2(\E)} \leq 2 a_{\min}^{-1} C  e^{t} \eps \le C' \eps,
\end{align*}
with $a_{\min} = \min\nolimits_{e\in\E} a^e$ and constant $C'=2 a_{\min}^{-1} C e^T$ that is independent of $\eps$ and $t$. 
Together with Step~1 this completes the proof of the theorem.
\end{proof}

\subsection{Summary}

The previous theorem shows that the asymptotic analysis of convection--diffusion problems can be extended almost verbatim to networks, if appropriate coupling conditions and corresponding boundary layer functions are defined at the network junctions. By considering stationary problems or networks consisting only of a single pipe, one can see that the rate of the theorem can again not be improved. 

Before closing the presentation, let us mention some directions for further research: 
A natural next step would be to consider numerical approximations for singularly-perturbed convection-diffusion problems on networks. Based on the analysis given in this paper, we would expect that most of the results available for a single pipe, see \cite{RoosStynesTobiska} and the references given there, can be extended to networks. 
We would also expect that the convergence of the semigroup approach of \cite{Bardos70} can be extended to the network setting quite naturally.
Another point of interest might be to consider nonlinear problems and the asymptotic convergence in different metrics, which should be possible in the framework of entropy methods; 
we refer to \cite{Juengel16} for an introduction to the field.

\section*{Acknowledgements}
The authors are grateful for support by the German Research Foundation (DFG) via grants TRR~154 project C04 and TRR~146 project C03, 
and via the Center for Computational Engineering at TU Darmstadt.


\end{document}